\newcommand{\halmos}{\newline\vspace{3mm}\hfill $\Box$}
\shorttitle{Loss rate for a general L\'{e}vy process} 
\begin{document}

\title{Loss rate for a general L\'{e}vy process with downward periodic barrier.} 

\authorone[University of Wroc\l aw]{Z. Palmowski} 
\addressone{Mathematical Institute, University of Wroc\l aw, pl.\ Grunwaldzki 2/4, 50-384 Wroc\l aw, Poland, E-mail: zbigniew.palmowski@math.uni.wroc.pl}
\authortwo[University of Wroc\l aw]{P. \'{S}wi\c{a}tek} 
\addresstwo{Mathematical Institute, University of Wroc\l aw, pl.\ Grunwaldzki 2/4, 50-384 Wroc\l aw, Poland, E-mail: przemyslaw.swiatek@math.uni.wroc.pl}

\begin{abstract}
In this paper we consider a general L\'{e}vy process $X$ reflected at downward periodic barrier $A_t$ and constant upper barrier $K$
giving a process $V^K_t=X_t+L^A_t-L^K_t$.  We find the expression for a loss rate defined by $l^K=\mathbb{E} L^K_1$
and identify its asymptotics as $K\to\infty$ when $X$ has light-tailed jumps and $\mathbb{E}X_1<0$.

\end{abstract}

\keywords{loss rate, L\'{e}vy process, barrier, queueing process.} 

\ams{60K05, 60K25}{60K10} 

\section{Introduction}
In this paper we consider a general L\'{e}vy process $X$ reflected at downward periodic barrier $A_t=\varphi(t+U)$
(for a periodic nonnegative function $\varphi(t)$ with period length $s$ and $U$ having uniform distribution on $[0,s]$)
and at the constant upper barrier $K$,
giving a process
\begin{equation}\label{jeden}V^K_t=X_t+L^A_t-L^K_t
\end{equation}
being solution of respective Skorohod problem on each period where reflection is within bounded and convex set
(see \cite{Dupuis1, Tanaka}).
In above we assume that $\varphi(t)\in [0,a]$ for some $a<K$. Process $X$ is defined on the filtered space $(\Omega,\mathcal{ F},\{\mathcal{F}_t\}_{t\geq 0}, \mathbb{P})$ with
the natural filtration that satisfies
the usual assumptions of right continuity and completion.
From now on we will also assume that jump measure $\nu$ of $X$ is nonlattice.

In this paper we find the expression for a loss rate defined by
\begin{equation}\label{definitionlossrate}l^K=\mathbb{E} L^K_1,\end{equation}
where $\mathbb{E}$ denotes the expectation when reflected
process is stationary with stationary measure $\pi_K$ (that is $\mathbb{E}(\cdot)=\int_0^\infty \mathbb{E}[\cdot|V_0^K=x]\;\pi_K(dx)$),
 and prove that $l^K\sim De^{-\gamma K}$ as $K\to\infty$, where $\gamma$ solves $\kappa(\gamma)=0$ for a Laplace exponent
$\kappa(\alpha)=\log \mathbb{E} \exp\{\alpha X(1)\}$  (which is well-defined in some set $\Theta$) when $X$ has light-tailed jumps and $\mathbb{E} X_1<0$.

The motivation comes from various queueing and telecommunication models (see \cite{2,4,9,10,12,23,25}).
Applications, where such a reflected L\'{e}vy process considered in this paper is natural, are models
where additionally to the input and output mechanism
modeled by a L\'{e}vy process there is a constant input given by a downward barrier $A_t$. This additional input is not
available on liquid basis, but can only be used after some maturity date has been
reached. We choose that this time lag is fixed and equals the length $s$ of the period of $\varphi$
(for exponential time delay see \cite{KOM}).
For example, in view of internet networking applications, one considers the combined behavior of two
services (e.g. streaming video and some other data). The first input behaves like a L\'{e}vy process.
The other input grows deterministically and can be served only at some fixed time $s$.
The combined workload now behaves like
a L\'{e}vy process reflected at a lower barrier $A_t$.

Fluid models with finite buffers are useful to model systems where losses
are of crucial importance, as in inventory theory and telecommunications. Indeed,
during the recent years real-time applications such as video streaming, interactive games have become increasingly popular among users.
These applications are generally delay sensitive and require some preferential treatment in order to satisfy a desired level of  Quality of Service.
Traditionally finite capacity buffer mechanisms have been employed in the network routers, in which arriving packets are dropped when workload reaches
its maximum capacity. In this paper we analyze the intensity of packets loss given by so-called loss rate given in (\ref{definitionlossrate}).

There has been a great deal of work on overflow probabilities in various fluid and queueing models
but there have been relatively few studies on the loss rate in finite buffer systems. When jumps of the
L\'{e}vy process are heavy-tailed that one might have a hope to find relationship between these
two notions (see \cite{4,10,13} for more classical models).
Still in our model we focus on light-tailed case and therefore we choose Kella-Whitt martingale approach \cite{12}.
In fact we follow the ideas included in seminal paper of Asmussen and Pihlsg{\aa}rd \cite{AP}, where both barriers are constant
(see also \cite{miyazawa} for matrix analytic method). This case corresponds to the assumption that $A(t)\equiv 0$.
Denoting the loss rate by $l^{K,0}$ analyzed in \cite{AP} we of course immediately get bounds:
\begin{equation}\label{bounds}
l^{K,0}\leq l^K \leq l^{K-a,0},
\end{equation}
from which and \cite[Th. 4.1]{AP} it follows e.g. that $\frac{1}{K}\log l^K=-\gamma$.
In this paper we focus on more precise exact asymptotics.

The paper is organized as follows. In Section \ref{sec:pr} we give preliminary results and in Section \ref{sec:main} the main results with proofs.

\section{Preliminaries}\label{sec:pr}

Assume from now on that
$\varphi\in \mathcal{C}^1({\rm int}\, J_k)$ is invertible on some disjoint intervals $J_k$ satisfying $\cup_{k=1}^n J_k=[0,s]$ with $\varphi^\prime(x)\neq 0$ for $x\in {\rm int}\, J_k$.

\begin{lem}
Process $A_t=\varphi(t+U)$ has invariant measure:
$$\xi(dy)=\sum_{k=1}^n\frac{1}{s}\left|h^{'}_k(y)\right|\mathbf{1}_{\varphi({\rm int} J_k)}(y)\,dy,$$ where $h_k$ is a inverse of $\varphi$ on ${\rm int}\, J_k$.
\end{lem}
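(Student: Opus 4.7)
The plan is to show first that $A_t$ is a stationary process (so any invariant measure is simply the marginal law of $A_0 = \varphi(U)$), and then to compute the law of $\varphi(U)$ by an explicit change of variables on each monotonic piece $J_k$.

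For stationarity, I would observe that since $\varphi$ is $s$-periodic and $U$ is uniform on $[0,s]$, the shifted variable $(U+t) \bmod s$ is again uniform on $[0,s]$, so $\varphi(t+U) = \varphi((t+U) \bmod s)$ is equal in distribution to $\varphi(U)$ for every $t \geq 0$. More generally, the finite-dimensional distributions of $(A_{t_1+r},\dots,A_{t_m+r})$ coincide with those of $(A_{t_1},\dots,A_{t_m})$, so the law of $A_0$ is indeed an invariant measure for the process.

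It then remains to identify the distribution $\xi$ of $Y := \varphi(U)$. For any bounded Borel function $f$,
\begin{equation*}
\mathbb{E} f(Y) = \frac{1}{s}\int_0^s f(\varphi(u))\, du = \frac{1}{s}\sum_{k=1}^n \int_{J_k} f(\varphi(u))\, du,
\end{equation*}
using that $\cup_k J_k = [0,s]$ and that the boundaries form a null set. On each $\mathrm{int}\, J_k$ the function $\varphi$ is $\mathcal{C}^1$ with nonvanishing derivative, hence a $\mathcal{C}^1$-diffeomorphism onto $\varphi(\mathrm{int}\, J_k)$ with inverse $h_k$. Performing the substitution $y = \varphi(u)$, $du = h_k'(y)\,dy$, and taking absolute values to absorb orientation (since $\varphi$ may be increasing or decreasing on $J_k$), I obtain
\begin{equation*}
\int_{J_k} f(\varphi(u))\, du = \int_{\varphi(\mathrm{int}\, J_k)} f(y)\, |h_k'(y)|\, dy.
\end{equation*}
Summing over $k$ and dividing by $s$ yields exactly the formula for $\xi(dy)$ claimed in the lemma.

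There is no serious obstacle here; the only point requiring a little care is the book-keeping on the boundaries of the $J_k$ (which are a finite union of points and thus Lebesgue-negligible) and the sign of $h_k'$ on intervals where $\varphi$ is decreasing, which is handled by the absolute value. Once stationarity and the change of variables are in place, the identification of $\xi$ is immediate. \halmos
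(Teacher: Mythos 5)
Your proof is correct and follows essentially the same route as the paper: reduce the claim to the time-invariance of the one-dimensional marginal of $A_t$ by shifting $U+t$ modulo $s$ and invoking periodicity of $\varphi$, and then compute the law of $\varphi(U)$ by a piecewise change of variables on the monotonicity intervals $J_k$. The paper leaves the second step as a "straightforward argument" about piecewise strictly monotone functions; you carry it out explicitly, but there is no substantive difference in method.
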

\begin{proof} It is sufficient to check that for $t\in [0,s)$:
\begin{equation}
\mathbb{P}(A_t\leq x)=\mathbb{P}(\varphi(t+U)\leq x)=\frac{1}{s}\int_0^s\mathbf{1}_{(\varphi(t+u)\leq x)}du
=\frac{1}{s}\int_0^s\mathbf{1}_{(\varphi(u)\leq x)}du\;,\label{ref}
\end{equation}
where the last equality is a consequence of periodicity of $\varphi$.
The second part of the theorem follows from a straightforward arguments concerning a distribution of a piecewise strictly monotone function of r.v.
\halmos\end{proof}

\begin{ex}
The most interesting case for applications is a saw-like lower boundary modeling constant intensity input (with rate $1$ for simplicity):
\begin{equation}\label{saw}\varphi(t)=
t\;{\rm mod}\; a.
\end{equation}
with $0<a<K$. In this case $n=1$, $J_1=[0,a]$, $s=a$ and $\xi(dy)=\frac{dy}{a}\mathbf{1}_{(y\in[0,a])}$.
\end{ex}

\begin{ex}
More complex situation will appear when $\varphi$ is composed from few lines with different slopes:
\begin{equation}\label{exvarphi2}\varphi(t)=\left\{
\begin{array}{lr}
t& \mbox{for $t\in[0,1)$,}\\
1-2(t-1)&\mbox{for $t\in [1, \frac{3}{2})$,}\\
3\left(t-\frac{3}{2}\right)&\mbox{for $t\in [\frac{3}{2}, \frac{5}{2})$.}
\end{array}\right.
\end{equation}
In this case $n=3$, $J_1=[0,1]$, $J_2=[1, \frac{3}{2})$, $J_3=[\frac{3}{2}, \frac{5}{2})$, $s=\frac{5}{2}$ and $\xi(dy)=
\frac{2}{5}\left(\mathbf{1}_{\varphi(J_1)}(y)+\frac{1}{2}\mathbf{1}_{\varphi(J_2)}(y)+\frac{1}{3}\mathbf{1}_{\varphi(J_3)}(y)\right)\,dy.$
\end{ex}

Following arguments of Asmussen \cite[p. 393-394]{2}
we have the following representation of stationary distribution of $V_t^K$.
 \begin{lem}
The stationary distribution $V_\infty^K$ of the two-sided reflected L\'{e}vy process is given by $$\mathbb{P}(V_\infty^K\geq x)=\int_0^{a}
\underbrace{\sum_{k=1}^n\mathbb{P}(X_{\tau}\geq \widehat{A}_\tau^{-z,k} +x)p_k(z)}_{\overline{\pi}_K^z(x)}\xi(dz),$$ where $\tau =\inf\{t\geq 0:X_t \notin [x-K, \widehat{A}_t^{-z,k}+x)\},$ and $\widehat{A}_t^{-z,k}=-\varphi(h_k(z)-t)$ for $z\in \varphi(J_k)$ and
$p_k(z)=\mathbb{P}(U\in J_k|\varphi(U)=z)=\frac{|h_k^{'}(z)|\mathbf{1}_{\varphi(J_k)}(z)}{\sum_{j=1}^n|h_j^{'}(z)|\mathbf{1}_{\varphi(J_j)}(z)}$.
\end{lem}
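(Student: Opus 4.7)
The plan is to follow the time-reversal / Palm-calculus argument of Asmussen \cite[pp.~393--394]{2}, adapted to the periodic-barrier setting. First I would decompose the stationary law by conditioning on the initial value $A_0$. By the preceding lemma, $A_0$ is distributed as $\xi$. Given $A_0=z$, the hidden phase $U$ lies in the branch $J_k$ with conditional probability $p_k(z)$, since $U=h_k(z)$ on $\{U\in J_k\}$ and the change-of-variables formula used to derive $\xi$ forces exactly the branch weights $|h_k'(z)|/\sum_j|h_j'(z)|$. Summing over $k$ and integrating against $\xi(dz)$ thus integrates out the initial barrier state, and the problem reduces to identifying the conditional tail $\overline{\pi}_K^{z}(x)$ on each branch.

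For each branch $k$ and phase $U=h_k(z)$ I would perform a pathwise time reversal. In the stationary regime the trajectory of $V^K$ on $[0,T]$ has the same distribution as a dual reflected process on $[0,T]$ run backwards in time. Under this reversal the L\'evy process $X$ is replaced by its dual, having the same law; the upper constant barrier $K$ becomes the lower barrier $x-K$; and the lower barrier $A_t=\varphi(t+U)$, evaluated along the reversed clock starting from phase $U=h_k(z)$, becomes the moving upper curve $x+\widehat{A}_t^{-z,k}=x-\varphi(h_k(z)-t)$. The event $\{V^K_\infty\geq x\}$ then translates into the event that the dual L\'evy process first exits the time-dependent interval $[x-K,\widehat{A}_t^{-z,k}+x)$ through the upper boundary, i.e.\ $\{X_\tau\geq \widehat{A}_\tau^{-z,k}+x\}$. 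Combining this with the weights $p_k(z)$ and averaging over $\xi(dz)$ yields the claimed formula.

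The main obstacle is the rigorous justification of the time reversal of the barrier across the kinks of $\varphi$. The multi-valued inverse forces us to track which branch $J_k$ the phase $U$ currently sits in, and it is exactly this bookkeeping that introduces the conditional weights $p_k(z)$. On each open piece $J_k$ the argument reduces to the standard Asmussen computation for a smoothly moving barrier; the contribution from the (countable) break points is negligible thanks to quasi-left-continuity of $X$ and the nonlattice assumption on $\nu$, so no mass is lost at the kinks. A secondary technical point is verifying that the dual reflected process is well defined as a solution of the reverse Skorokhod problem on the time-dependent set $[x-K,\widehat{A}_t^{-z,k}+x)$, which follows from the convexity hypothesis already invoked for \eqref{jeden} and the smoothness of $\varphi$ on $\mathrm{int}\,J_k$.
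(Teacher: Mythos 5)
Your proposal is correct and follows essentially the same approach the paper indicates: the paper itself gives no explicit proof of this lemma, only the citation to Asmussen~\cite[pp.~393--394]{2}, and your sketch is a faithful elaboration of that time-reversal/duality argument adapted to the periodic barrier, with the branch weights $p_k(z)$ and the reversed curve $\widehat{A}_t^{-z,k}=-\varphi(h_k(z)-t)$ arising exactly as you describe from conditioning on $A_0=z$ and the phase branch $J_k$.
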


\begin{lem}\label{Fact}
If $\mathbb{E}|X_1|<\infty$ then $\mathbb{E}L_t^A<\infty$ and $\mathbb{E}L_t^K<\infty$ for each $t\geq 0$.
\end{lem}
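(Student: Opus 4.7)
The starting observation is the pathwise identity
\[ L_t^A - L_t^K = V_t^K - V_0^K - X_t, \]
which is just a rearrangement of \eqref{jeden}. Because $V^K$ is confined to $[A_t,K]\subseteq[0,K]$ and the L\'evy property together with $\mathbb{E}|X_1|<\infty$ gives $\mathbb{E}|X_t|<\infty$ for every $t$, we obtain $\mathbb{E}|L_t^A-L_t^K|\leq 2K+\mathbb{E}|X_t|<\infty$. Hence $\mathbb{E}L_t^A$ and $\mathbb{E}L_t^K$ are finite or infinite together, and it suffices to establish finiteness of one of them; I would target $\mathbb{E}L_t^A$.

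To isolate $L^A$, I would apply It\^o's formula to $f(V_t^K)$ for the bounded $C^2$ function $f(x)=(K-x)^2$. The point of this specific choice is that $f'(K)=0$, so the push at the upper barrier largely disappears from the expansion: only a jump remainder $R_t$, dominated pointwise by $(\Delta V_s^K)^2$ up to constants, survives. After rearrangement the formula gives the schematic identity
\[ 2\int_0^t (K-V_{s-}^K)\,dL_s^A = (K-V_0^K)^2 - (K-V_t^K)^2 - 2\int_0^t (K-V_{s-}^K)\,dX_s + \sigma^2 t + \sum_{s\leq t}(\Delta V_s^K)^2 + R_t, \]
where $\sigma^2$ is the Gaussian coefficient of $X$.

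Because $A_s\leq a<K$ and $V_{s-}^K=A_s$ on the continuous support of $dL^A$, we have $K-V_{s-}^K\geq K-a>0$ there, giving $\int_0^t(K-V_{s-}^K)\,dL_s^A \geq (K-a)L_t^A$ up to an integrable jump correction dominated by $|\Delta X_s|$ (integrable under $\mathbb{E}|X_1|<\infty$). Taking expectations on the right-hand side, the quadratic terms are bounded by $K^2$, the stochastic integral reduces to its drift $\mu\,\mathbb{E}\int_0^t(K-V_{s-}^K)\,ds$ with $\mu=\mathbb{E}X_1$ (the compensated part being a true martingale by boundedness of the integrand), and $\mathbb{E}\sum_{s\leq t}(\Delta V_s^K)^2$ is finite by the pointwise estimate $|\Delta V_s^K|\leq |\Delta X_s|\wedge K$ combined with the L\'evy-measure bounds $\int_{|x|\leq 1}x^2\,\nu(dx)<\infty$ and $\int_{|x|>1}|x|\,\nu(dx)<\infty$ (the latter being precisely what $\mathbb{E}|X_1|<\infty$ delivers). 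Putting this together yields $2(K-a)\mathbb{E}L_t^A \leq \text{(finite)}$, whence $\mathbb{E}L_t^A<\infty$ and then $\mathbb{E}L_t^K<\infty$ follows from the first step.

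The main obstacle is the careful jump bookkeeping in the It\^o expansion for a general, possibly infinite-variation L\'evy process: verifying $\mathbb{E}\sum(\Delta V_s^K)^2<\infty$ and absorbing the residual $R_t$. Both hinge on the inequality $|\Delta V_s^K|\leq |\Delta X_s|\wedge K$ together with the hypothesis $\mathbb{E}|X_1|<\infty$, and this is the only place where integrability of $X_1$ is substantively used beyond the trivial reduction step.
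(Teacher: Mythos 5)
Your overall strategy is genuinely different from the paper's. The paper reduces to showing $\mathbb{E}L_t^A<\infty$ exactly as you do, but then dispatches that in one line by a renewal argument: the increments of $L^A$ between consecutive visits of $V^K$ to the lower barrier form a random walk, and Wald's identity together with $\mathbb{E}|X_1|<\infty$ bounds its expectation (citing \cite{AP} for the details). You instead apply It\^o's formula to $f(x)=(K-x)^2$ so that $f'(K)=0$ kills the continuous $L^K$ contribution and the positivity of $K-V_{s-}$ on the lower barrier isolates $L^A$. That is a legitimate, self-contained alternative, and the reduction step and the choice of test function are sound.

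However, the jump bookkeeping — which you correctly flag as the main obstacle — is not quite right as stated. Two claims need repair. First, your residual $R_t = 2\sum_{s\le t}(K-V^K_{s-})\Delta L_s^K$ is \emph{not} dominated pointwise by $(\Delta V_s^K)^2$ up to a constant: at a jump over the upper barrier one has $(K-V^K_{s-})\Delta L_s^K=\Delta V_s^K\cdot\Delta L_s^K$ with $\Delta V_s^K\le K$ but $\Delta L_s^K=\Delta X_s-\Delta V_s^K$ unbounded, so the ratio to $(\Delta V_s^K)^2$ blows up for large $\Delta X_s$. Second, for the $dL^A$ integral you write the jump correction $\sum(V^K_{s-}-a)^+\Delta L_s^A$ as ``dominated by $|\Delta X_s|$, integrable under $\mathbb{E}|X_1|<\infty$''; but $\sum_{s\le t}|\Delta X_s|$ is infinite for infinite-variation jump parts, so this domination alone proves nothing. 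Both terms \emph{are} controllable, but only after the standard split into small and large jumps: for $|\Delta X_s|\le 1$ use the quadratic relations (e.g.\ $(K-V^K_{s-})\Delta L_s^K\le(\Delta X_s)^2$ because both factors are $\le\Delta X_s$, and $(V^K_{s-}-a)^+\Delta L_s^A\le(\Delta X_s)^2$ since $\Delta L_s^A>0$, $V^K_{s-}>a$ force $|\Delta X_s|>V^K_{s-}-a$ and $\Delta L_s^A\le|\Delta X_s|$), which gives $\int_{|x|\le1}x^2\nu(dx)<\infty$ with no moment assumption; for $|\Delta X_s|>1$ use the linear bounds $(K-V^K_{s-})\Delta L_s^K\le K|\Delta X_s|$, $(V^K_{s-}-a)^+\Delta L_s^A\le K|\Delta X_s|$ and then $\int_{|x|>1}|x|\nu(dx)<\infty$, which is where $\mathbb{E}|X_1|<\infty$ actually enters. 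With that correction your plan does close, but the inequality $|\Delta V_s^K|\le|\Delta X_s|\wedge K$ alone, which you identify as the key, is insufficient: you also need the structural identities relating $\Delta L^K_s$, $\Delta L^A_s$ to $\Delta X_s$ at boundary-hitting jumps.
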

\begin{proof}
Note that by (\ref{jeden}) we have $\mathbb{E}L_t^K<\infty$ if $\mathbb{E}L_t^A<\infty$. Condition $\mathbb{E}L_t^A<\infty$ follows from the Wald identity 
applied to the random walk with increments being corrections of $X$ between consecutive visits of the downward barrier (for details see \cite{AP}).
\halmos
\end{proof}

Further we need a little modification of L\'{e}vy exponent $\kappa(\alpha)$. We will treat large and small jumps separately. Let $L$ be a constant that satisfies $L>\max(K,1)$. Then $\kappa(\alpha)$ can be rewritten as follows:
\begin{equation}\theta_L\alpha+\frac{\sigma^2\alpha^2}{2}+\int_{-\infty}^{\infty}[e^{\alpha x}-1-\alpha x\mathbf{1}_{(|x|\leq L)}]\nu(dx),\ \ \alpha\in\Theta,\end{equation}
where $\theta_L=\theta+\int_1^Lx\nu(dx)+\int_{-L}^{-1}x\nu(dx).$

For any process $Y$ we will denote by $\{Y_t^c\}$ its continuous part and the jumps by $\Delta Y_s=Y_s-Y_{s-}$.

We split $\Delta L_t^K$ into two parts, $\underline{\Delta}L_t^K$ and $\overline{\Delta}L_t^K$, corresponding to $\Delta X_s\in[0,L]$ and $\Delta X_s\in(L,\infty)$, respectively, and $\Delta L_t^A$ into  $\underline{\Delta}L_t^A$ and $\overline{\Delta}L_t^A$, corresponding to $\Delta X_s\in[-L,0]$ and $\Delta X_s\in(-\infty,-L)$, respectively. Let $\underline{l}_j^K=\mathbb{E}\sum_{0\leq s\leq 1}\underline{\Delta}L_s^K$, $\ \overline{l}_j^K=\mathbb{E}\sum_{0\leq s\leq 1}\overline{\Delta}L_s^K$, $\ \underline{l}_j^A=\mathbb{E}\sum_{0\leq s\leq 1}\underline{\Delta}L_s^A$, $\ \overline{l}_j^A=\mathbb{E}\sum_{0\leq s\leq 1}\overline{\Delta}L_s^A$. Then $l_j^K=\underline{l}_j^K+\overline{l}_j^K$ and $l_j^A=\underline{l}_j^A+\overline{l}_j^A$. Finally, let
$l_c^K=l^K-l^K_j$ and $l_c^A=l^A-l^A_j$ with $l^A=\mathbb{E}L^A_1$.
\begin{thm}
For $\alpha\in\Theta$,
\begin{eqnarray}\nonumber M_t =&& \kappa(\alpha)\int_0^te^{\alpha V_s^K}ds+e^{\alpha V_0^K}-e^{\alpha V_t^K}+\alpha\int_0^te^{\alpha A_s}dL_s^{A,c}\\&& +\sum_{0\leq s\leq t}e^{\alpha A_s}(1-e^{-\alpha\Delta L_s^A})-\alpha e^{\alpha K}L_t^{K,c} + e^{\alpha K}\sum_{0\leq s\leq t}(1-e^{\alpha\Delta L_s^K})\label{martyngal}\end{eqnarray}
is a zero-mean martingale.
\end{thm}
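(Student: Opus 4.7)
The plan is to apply the It\^o--L\'evy formula to $f(v)=e^{\alpha v}$ composed with the semimartingale $V^K=X+L^A-L^K$, and to regroup the resulting pieces so that the reflection contributions in \eqref{martyngal} emerge naturally, while the drift, diffusion and jump compensator terms collapse to $\kappa(\alpha)\int_0^t e^{\alpha V^K_s}\,ds$.

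First I would treat the continuous part. With the $L$-truncated L\'evy--It\^o decomposition $dX_s=\theta_L\,ds+\sigma\,dB_s+\int_{|x|\leq L}x\tilde N(ds,dx)+\int_{|x|>L}xN(ds,dx)$, one has $dV^{K,c}_s=\theta_L\,ds+\sigma\,dB_s+dL^{A,c}_s-dL^{K,c}_s$ and $\langle V^{K,c}\rangle_s=\sigma^2 s$. The Skorokhod conditions force $L^{A,c}$ to grow only on $\{s:V^K_s=A_s\}$ and $L^{K,c}$ only on $\{s:V^K_s=K\}$, so $\alpha\int_0^t e^{\alpha V^K_{s-}}\,dV^{K,c}_s$ reduces to
\[
\alpha\theta_L\int_0^t e^{\alpha V^K_s}\,ds+\alpha\sigma\int_0^t e^{\alpha V^K_{s-}}\,dB_s+\alpha\int_0^t e^{\alpha A_s}\,dL^{A,c}_s-\alpha e^{\alpha K}L^{K,c}_t.
\]

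Second I would treat the jumps. At a jump time $s$ of $V^K$ I write
\[
e^{\alpha V^K_s}-e^{\alpha V^K_{s-}}=e^{\alpha V^K_{s-}}(e^{\alpha\Delta X_s}-1)+e^{\alpha(V^K_{s-}+\Delta X_s)}\bigl(e^{\alpha(\Delta L^A_s-\Delta L^K_s)}-1\bigr).
\]
The Skorokhod identities $V^K_{s-}+\Delta X_s=A_s-\Delta L^A_s$ on $\{\Delta L^A_s>0\}$ and $V^K_{s-}+\Delta X_s=K+\Delta L^K_s$ on $\{\Delta L^K_s>0\}$ convert the second piece, summed over $s\leq t$, into exactly $\sum e^{\alpha A_s}(1-e^{-\alpha\Delta L^A_s})+e^{\alpha K}\sum(1-e^{\alpha\Delta L^K_s})$. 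The remaining sum $\sum_{s\leq t}e^{\alpha V^K_{s-}}(e^{\alpha\Delta X_s}-1)$ I split via $e^{\alpha x}-1=(e^{\alpha x}-1-\alpha x\mathbf{1}_{|x|\leq L})+\alpha x\mathbf{1}_{|x|\leq L}$ and compensate against $\nu(dx)\,ds$. Combining the resulting compensator with $\alpha\theta_L\int e^{\alpha V^K_s}\,ds$ and the quadratic-variation term $\frac{1}{2}\alpha^2\sigma^2\int e^{\alpha V^K_s}\,ds$, the L\'evy--Khintchine formula yields precisely $\kappa(\alpha)\int_0^t e^{\alpha V^K_s}\,ds$; what is left over is a local martingale consisting of the Brownian stochastic integral and a compensated Poisson integral. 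Rearranging identifies $M_t$ with (the negative of) this local martingale.

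The main obstacle is upgrading this local martingale to a true zero-mean martingale. Since $V^K_t\in[0,K]$ the factor $e^{\alpha V^K_t}$ is bounded for $\alpha\in\Theta$, and the Brownian and small-jump compensated integrals lie in $L^2$ on bounded intervals. The delicate terms are the reflection jump sums: using the pointwise bounds $\Delta L^K_s\leq\Delta X_s^+$ and $\Delta L^A_s\leq(\Delta X_s)^-$, together with $\mathbb{E}e^{\alpha X_1}<\infty$ for $\alpha\in\Theta$ and Lemma \ref{Fact}, the expected size of each sum is finite. A standard localization at $\tau_n=\inf\{t:|X_t|+L^A_t+L^K_t>n\}$ followed by dominated convergence then promotes the local martingale property to the martingale property, whence $\mathbb{E}M_t=\mathbb{E}M_0=0$.
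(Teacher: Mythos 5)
Your proof is correct and, modulo the level of detail, follows essentially the same route as the paper. The paper simply cites the Kella--Whitt identity (for a L\'evy process $X$ plus an adapted BV process $Y$, the process $\kappa(\alpha)\int_0^t e^{\alpha Z_s}ds+e^{\alpha Z_0}-e^{\alpha Z_t}+\alpha\int_0^t e^{\alpha Z_s}dY_s^c+\sum_{s\le t} e^{\alpha Z_s}(1-e^{-\alpha\Delta Y_s})$ is a local martingale), then specializes to $Y=L^A-L^K$, uses Lemma \ref{Fact} for the BV property, and finally simplifies $e^{\alpha V^K_s}$ to $e^{\alpha A_s}$ or $e^{\alpha K}$ at reflection times; you instead re-derive that identity from the semimartingale It\^o formula plus the L\'evy--Khintchine decomposition. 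The key computational step --- using the Skorokhod constraints to turn $e^{\alpha V^K_{s-}+\alpha\Delta X_s}\bigl(e^{\alpha(\Delta L^A_s-\Delta L^K_s)}-1\bigr)$ into $e^{\alpha A_s}(1-e^{-\alpha\Delta L^A_s})$ or $e^{\alpha K}(1-e^{\alpha\Delta L^K_s})$ --- is identical in both arguments, and the integrability/localization step plays the same role as the paper's appeal to $\mathbb{E}\sup_{s\le t}|M_s|<\infty$.

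Two small points of care in your version. First, the split of the It\^o formula into a $dV^{K,c}$ integral plus a bare sum $\sum e^{\alpha V^K_{s-}}(e^{\alpha\Delta X_s}-1)$ is not legitimate for an infinite-variation $X$: that sum need not converge absolutely, so you must either keep the general-semimartingale form with the subtracted $f'(Z_{s-})\Delta Z_s$ term, or (as you gesture at) immediately compensate the small-jump part against $\nu(dx)\,ds$ rather than writing the pathwise sum first. Second, when you invoke $\Delta L^A_s\le(\Delta X_s)^-$ you are implicitly using continuity of $A_s$ (so that $A_{s-}=A_s\le V^K_{s-}$); this holds here because $\varphi$ is continuous, but is worth flagging. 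Neither affects the conclusion.
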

\begin{proof}
It is well-known that for an $\mathcal{F}_t$-adapted process
$Y_t=\int_0^tdY_s^c+\sum_{0\leq s\leq t}\Delta Y_s$
of locally bounded variation, process
$$K_t=\kappa(\alpha)\int_0^te^{\alpha Z_s}ds+e^{\alpha x}-e^{\alpha Z_t}+\alpha \int_0^te^{\alpha Z_s}dY_s^c+\sum_{0\leq s\leq t}e^{\alpha Z_s}(1-e^{-\alpha\Delta Y_s})$$
is a local martingale whenever $\alpha\in\Theta$, where $Z_t=x+X_t+Y_t$.
Taking $Y_t=L_t^A-L_t^K$ and using Lemma \ref{Fact} to prove that $Y$ has locally bounded variation, we get that
\begin{eqnarray}\nonumber M_t=&&\kappa(\alpha)\int_0^te^{\alpha V_s^K}ds+e^{\alpha V_0^K}-e^{\alpha V_t^K}\\ \nonumber &&+ \alpha\int_0^te^{\alpha V_s^K}dL_s^{A,c}+\sum_{0\leq s\leq t}e^{\alpha V_s^K}(1-e^{-\alpha\Delta L_s^A})\\
\nonumber &&- \alpha \int_0^te^{\alpha V_s^K}dL_s^{K,c}+\sum_{0\leq s\leq t}e^{\alpha V_s^K}(1-e^{\alpha\Delta L_s^K})\end{eqnarray}
is a local martingale. $M_t$ equals (\ref{martyngal}) since $V_s^K=K$ just after a jump of $L_s^K$, and $V_s^K=A_s$ just after a jump of $L_s^A$.
To prove that $\{M_t\}$ is a true martingale it sufficient to show that $\mathbb{E}\sup_{0\leq s\leq t}M_s<\infty$. This follows from the following conditions:
$V_t^K\leq K$, $\mathbb{E}L_t^{A,c},\ \mathbb{E}L_t^{K,c}<\infty$ and $\mathbb{E}\sum_{0\leq s\leq t}|1-e^{\alpha\Delta L_s^{K}}|<\infty$,
$\mathbb{E}\sum_{0\leq s\leq t}|1-e^{-\alpha\Delta L_s^{A}}|<\infty$ (see also the proof of \cite[Prop. 3.1]{AP}).
\halmos\end{proof}

\begin{cor}\label{cor8}
Let $\alpha\in\Theta$. Then $l^K$ satisfies the following equation:
\begin{eqnarray}
\alpha (1-e^{\alpha K})l^K &=& -\kappa(\alpha)\mathbb{E} e^{\alpha V_0^K}+\alpha\mathbb{E}X_1-\alpha e^{\alpha K}\overline{l}_j^K+\alpha\overline{l}_j^A\nonumber\\
\nonumber &&+\frac{\alpha^2}{2}\mathbb{E}\sum_{0\leq s\leq 1}(\underline{\Delta}L_s^K)^2 +\frac{\alpha^2}{2}\mathbb{E}\sum_{0\leq s\leq 1}(\underline{\Delta}L_s^A)^2\\
\nonumber &&-e^{\alpha K}\mathbb{E}\sum_{0\leq s\leq 1}(1-e^{\alpha\overline{\Delta}L_s^K})-\mathbb{E}\sum_{0\leq s\leq 1}e^{\alpha A_s}(1-e^{-\alpha\overline{\Delta}L_s^A})\\
&&-\alpha^2\mathbb{E}\int_0^1A_sdL_s^{A,c}-\alpha^2\mathbb{E}\sum_{0\leq s\leq 1}A_s\underline{\Delta} L_s^A+o(\alpha^2)
\label{7e}
\end{eqnarray}
\end{cor}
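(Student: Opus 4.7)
The plan is to take expectations of the martingale identity (\ref{martyngal}) at $t=1$ under the stationary distribution $\pi_K$, so that $\mathbb{E}M_1=0$. Stationarity gives $\mathbb{E}e^{\alpha V_1^K}=\mathbb{E}e^{\alpha V_0^K}$, cancelling those two exponential terms, while Fubini combined with stationarity yields $\mathbb{E}\int_0^1 e^{\alpha V_s^K}\,ds=\mathbb{E}e^{\alpha V_0^K}$, producing the leading factor $-\kappa(\alpha)\mathbb{E}e^{\alpha V_0^K}$ that will appear on the right-hand side of (\ref{7e}) once the resulting identity is solved for $\alpha(1-e^{\alpha K})l^K$.

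Next I would split each jump sum at the threshold $L$,
\[
\sum_{0\le s\le 1}e^{\alpha A_s}(1-e^{-\alpha\Delta L_s^A})=\sum_{0\le s\le 1}e^{\alpha A_s}(1-e^{-\alpha\underline{\Delta}L_s^A})+\sum_{0\le s\le 1}e^{\alpha A_s}(1-e^{-\alpha\overline{\Delta}L_s^A}),
\]
and analogously for $\Delta L_s^K$; the large-jump pieces then appear verbatim on the right-hand side of (\ref{7e}). For the small-jump parts, since $\underline{\Delta}L_s^{\cdot}\in[0,L]$, Taylor's theorem gives $1-e^{\mp\alpha x}=\pm\alpha x-\frac{\alpha^2 x^2}{2}+O(\alpha^3 x^3)$ uniformly in $|x|\le L$, and combined with $e^{\alpha A_s}=1+\alpha A_s+O(\alpha^2)$ (valid because $A_s\in[0,a]$) this delivers the quadratic terms $\frac{\alpha^2}{2}\mathbb{E}\sum(\underline{\Delta}L_s^K)^2$, $\frac{\alpha^2}{2}\mathbb{E}\sum(\underline{\Delta}L_s^A)^2$, together with the cross-term $-\alpha^2\mathbb{E}\sum A_s\underline{\Delta}L_s^A$ seen in (\ref{7e}). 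In the same way the continuous integral $\alpha\mathbb{E}\int_0^1 e^{\alpha A_s}\,dL_s^{A,c}$ expands as $\alpha l^{A,c}+\alpha^2\mathbb{E}\int_0^1 A_s\,dL_s^{A,c}+o(\alpha^2)$, and the factor $e^{\alpha K}$ multiplying the small $L^K$-jump quadratic term can be replaced by $1$ at the cost of an $o(\alpha^2)$ correction since $K$ is fixed.

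To finish, I would collect the first-order-in-$\alpha$ contributions, which equal $\alpha(l^{A,c}+\underline{l}_j^A)-\alpha e^{\alpha K}(l^{K,c}+\underline{l}_j^K)=\alpha(l^A-\overline{l}_j^A)-\alpha e^{\alpha K}(l^K-\overline{l}_j^K)$, and apply the stationary-mean identity $l^A=l^K-\mathbb{E}X_1$, which follows from $\mathbb{E}V_1^K=\mathbb{E}V_0^K$ together with (\ref{jeden}). This rewrites the first-order block as $\alpha(1-e^{\alpha K})l^K-\alpha\mathbb{E}X_1-\alpha\overline{l}_j^A+\alpha e^{\alpha K}\overline{l}_j^K$; isolating $\alpha(1-e^{\alpha K})l^K$ on the left and moving every other contribution to the right yields (\ref{7e}) exactly.

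The main technical obstacle is controlling the $o(\alpha^2)$ remainder uniformly, which amounts to checking $\mathbb{E}\sum_{s\le 1}(\underline{\Delta}L_s^{\cdot})^2<\infty$. This holds because the small jumps of $L^K$ and of $L^A$ are dominated in absolute value by the jumps of $X$ in $[0,L]$ and $[-L,0]$ respectively, so the expected sum of squares is bounded by $\int_{|x|\le L}x^2\,\nu(dx)<\infty$. The Taylor remainders are then pointwise dominated by $CL\,x^2$, so dominated convergence gives the required $o(\alpha^2)$ bound exactly as in the proof of \cite[Prop.\ 3.1]{AP}.
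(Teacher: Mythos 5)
Your proposal follows the same route as the paper: take $\mathbb{E}M_1=0$ under stationarity, split jump sums at the threshold $L$, Taylor-expand the small-jump sums and the $e^{\alpha A_s}$ factor, and consolidate the first-order terms via the stationary-mean identity $l^A=l^K-\mathbb{E}X_1$. You make explicit a few steps the paper leaves implicit — notably the use of $l^A=l^K-\mathbb{E}X_1$ (which the paper invokes only later, in the proof of the asymptotics theorem) and the replacement of $e^{\alpha K}$ by $1$ in front of the $\alpha^2$ term — but these are exactly the right fillers and the argument is correct.
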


\begin{proof} If we take $t=1$ in $M_t$ and use the stationarity of $V_t^K$ we get
\begin{eqnarray}
0=\kappa(\alpha)\mathbb{E}e^{\alpha V_0^K}&+&\alpha\mathbb{E}\int_0^1e^{\alpha A_s}dL_s^{A,c}+\mathbb{E}\sum_{0\leq s\leq 1}e^{\alpha A_s}(1-e^{-\alpha\Delta L_s^A})\\ \nonumber &-&\alpha e^{\alpha K}l_c^K+e^{\alpha K}\mathbb{E}\sum_{0\leq s\leq 1}(1-e^{\alpha\Delta L_s^K}).\label{odn8}
\end{eqnarray}
Moreover, we have
\begin{equation}\label{od9e}\sum_{0\leq s\leq 1}(1-e^{\alpha\Delta L_s^K})=\sum_{0\leq s\leq 1}(1-e^{\alpha\underline{\Delta} L_s^K})+\sum_{0\leq s\leq 1}(1-e^{\alpha\overline{\Delta} L_s^K}),
\end{equation}
\begin{equation}\label{10e}
\sum_{0\leq s\leq 1}e^{\alpha A_s}(1-e^{-\alpha\Delta L_s^A})=\sum_{0\leq s\leq 1}e^{\alpha A_s}(1-e^{-\alpha\underline{\Delta} L_s^A})+\sum_{0\leq s\leq 1}e^{\alpha A_s}(1-e^{-\alpha\overline{\Delta} L_s^A}).
\end{equation}
Applying expansion:
\begin{equation}
e^{\alpha x}=1+\alpha x+\frac{(\alpha x)^2}{2}+\frac{(\alpha x)^3}{6}e^{\theta\alpha x},\quad \theta\in [-1,1]
\end{equation}
to the first parts of the right-hand sides of (\ref{od9e}) and (\ref{10e}) and expansion $e^\alpha=1+\alpha+o(\alpha)$ to
$\alpha\mathbb{E}\int_0^1e^{\alpha A_s}dL_s^{A,c}$ completes the proof.\halmos\end{proof}

We will need also the following observation.

\begin{lem}\label{calkczesci}
We have:
$$\mathbb{E}\int_0^1A_sdL_s^A=\mathbb{E}(A_0)l^A,$$
where $\mathbb{E}A_0=\int_0^ay\,\xi(dy)$.
\end{lem}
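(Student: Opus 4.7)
The plan is to apply Stieltjes integration by parts to the product $A_s L^A_s$ and then exploit the joint stationarity of the reflected process with the periodic barrier $A$, together with the uniform law of the random phase $U$.

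First, I would apply the integration-by-parts formula
$$A_1 L^A_1 - A_0 L^A_0 = \int_0^1 A_{s-}\,dL^A_s + \int_0^1 L^A_{s-}\,dA_s + [A,L^A]_1.$$
The covariation $[A,L^A]_1$ vanishes almost surely: the jumps of $A$ live on the finite deterministic set of boundaries of the intervals $J_k$ inside $[0,1]$, while the jumps of $L^A$ occur at downward jumps of $X$ that push $V^K$ below the barrier, and the nonlattice assumption on $\nu$ rules out their coincidence. Setting $L^A_0 = 0$ by convention and taking expectations under the stationary measure gives
$$\mathbb{E}\int_0^1 A_s\,dL^A_s = \mathbb{E}[A_1 L^A_1] - \mathbb{E}\int_0^1 L^A_{s-}\,dA_s.$$

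Next, I would decompose $dA_s = \varphi'(s+U)\,\mathbf{1}_{s+U\in\cup_k \mathrm{int}\,J_k}\,ds + dJ_s$, where $J$ collects the jumps of $A$ at the interval boundaries, using the structure from the preliminary lemma. By stationarity of $A$ one has $\mathbb{E}[A_1]=\mathbb{E}[A_0]$, and by stationarity of the increments of $L^A$ one has $\mathbb{E}\,L^A_1 = l^A$. The cross-term $\mathbb{E}\int_0^1 L^A_{s-}\,dA_s$ and the boundary term $\mathbb{E}[A_1 L^A_1]$ are then rewritten through Fubini: since $U$ is uniform on one period and $\varphi$ is periodic, averaging $\varphi'(s+U)$ against the law of $U$ telescopes by periodicity, and an analogous cancellation applies to the sum of drops of $\varphi$ at interval boundaries. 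Combined with the change of variables that produces $\xi$, these manipulations collapse the right-hand side to $\mathbb{E}(A_0)\,l^A$.

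The main obstacle is the last step, namely disentangling the correlation between $A_s$ (a deterministic function of the random phase $U$) and $L^A$ (which depends on both $U$ and the L\'evy driver $X$). The key input is the combination of periodicity of $\varphi$ with $U\sim\mathrm{Unif}[0,s]$ (the period): phase averaging washes out the dependence of $\varphi$ and $\varphi'$ on $U$, so that the Palm mean of $A$ with respect to $dL^A$ coincides with its stationary mean $\int_0^a y\,\xi(dy)$. Once this decoupling is established, the two pieces recombine to produce the stated factorisation $\mathbb{E}(A_0)\,l^A$.
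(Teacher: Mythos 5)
Your plan shares the general strategy of the paper (integration by parts plus stationarity and Fubini), but it stops exactly where the real work begins, and the step where you claim the argument ``collapses'' is circular.

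Concretely: after the first integration by parts you are left with $\mathbb{E}[A_1 L^A_1]$ and $\mathbb{E}\int_0^1 L^A_{s-}\,dA_s$, both of which still carry the full correlation between the phase $U$ and the local time $L^A$. Your resolution is to assert that ``phase averaging washes out the dependence'' so that ``the Palm mean of $A$ with respect to $dL^A$ coincides with its stationary mean $\int_0^a y\,\xi(dy)$.'' But that assertion \emph{is} the lemma: saying the Palm mean of $A$ against $dL^A$ equals $\mathbb{E}A_0$ is exactly the identity $\mathbb{E}\int_0^1 A_s\,dL^A_s = \mathbb{E}(A_0)\,l^A$ restated. No amount of Fubini on $\varphi'(s+U)$ alone delivers this, because $L^A$ is not a function of $U$ only; you never supply the mechanism that lets you replace the random measure $dL^A_s$ by its intensity.

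The ingredient you are missing is the linearity $\mathbb{E}L^A_s = s\,l^A$, which the paper derives from the independent and stationary increments of $L^A$ under the invariant law of the phase. The paper conditions on $A_0=z$ (so $A^z_s$ is a deterministic path), integrates by parts once to get $\mathbb{E}[A^z_s L^A_s]\big|_0^1 - \mathbb{E}\int_0^1 L^A_s\,dA^z_s$, then substitutes $\mathbb{E}L^A_s = s\,l^A$ and integrates by parts a \emph{second} time in the deterministic integral $\int_0^1 s\,dA^z_s$, which collapses everything to $l^A\int_0^1 A^z_s\,ds$. Averaging over $z$ with $\xi(dz)$ then gives $l^A\,\mathbb{E}A_0$. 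Without the linear-in-$s$ expectation and the second integration by parts, the cross term in your decomposition has no handle, and the ``decoupling'' remains an unproven claim rather than a consequence of the setup.
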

\begin{proof}
Note that $$\mathbb{E}L_s^A=s\mathbb{E}L_1^A=sl^A$$ which is consequence of the fact that $L_s^A$ has independent and stationary increments
under invariant starting position of $A$.
The proof follows now from the following:
\begin{eqnarray*}
\mathbb{E}\int_0^1A_sdL_s^A&=&\int_0^a\mathbb{E}\int_0^1A_s^zdL_s^A\xi(dz)\\
&=&\int_0^a\left[\mathbb{E}A_s^zL_s^A|_0^1-\mathbb{E}\int_0^1L_s^AdA_s^z\right]\xi(dz)\\
&=&\int_0^a\left[A_1^zl^A-\int_0^1sl^AdA_s^z\right]\xi(dz)\\
&=&\int_0^a\left[A_1^zl^A-l^A(sA_s^z|_0^1-\int_0^1A_s^zds)\right]\xi(dz)\\
&=&\int_0^al^A\int_0^1A_s^zds\xi(dz)=l^A\int_0^1\int_0^aA_s^z\xi(dz)ds=l^A\mathbb{E}A_0.
\end{eqnarray*}\halmos\end{proof}

Now using Lemma \ref{calkczesci} we can rewrite (\ref{7e}) as follows.
\begin{lem}\label{12lem} As $\alpha \downarrow 0$ we have
\begin{eqnarray}
\alpha (1-e^{\alpha K}+\alpha\mathbb{E}(A_0))l^K &=& -\kappa(\alpha)\mathbb{E} e^{\alpha V_0^K}+\alpha\mathbb{E}X_1-\alpha e^{\alpha K}\overline{l}_j^K+\alpha\overline{l}_j^A\nonumber\\
\nonumber &&+\frac{\alpha^2}{2}\mathbb{E}\sum_{0\leq s\leq 1}(\underline{\Delta}L_s^K)^2 +\frac{\alpha^2}{2}\mathbb{E}\sum_{0\leq s\leq 1}(\underline{\Delta}L_s^A)^2\\
\nonumber &&-e^{\alpha K}\mathbb{E}\sum_{0\leq s\leq 1}(1-e^{\alpha\overline{\Delta}L_s^K})-\mathbb{E}\sum_{0\leq s\leq 1}e^{\alpha A_s}(1-e^{-\alpha\overline{\Delta}L_s^A})\\ &&+\alpha^2\mathbb{E}(A_0)\mathbb{E}X_1+\alpha^2\mathbb{E}\sum_{0\leq s\leq 1}A_s\overline{\Delta} L_s^A+o(\alpha^2).\label{12e}
\end{eqnarray}
\end{lem}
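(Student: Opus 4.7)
The plan is to rewrite the two last terms of (\ref{7e}), namely
$-\alpha^2\mathbb{E}\int_0^1 A_s\,dL_s^{A,c}-\alpha^2\mathbb{E}\sum_{0\leq s\leq 1}A_s\underline{\Delta}L_s^A$,
by inserting the integration-by-parts identity of Lemma \ref{calkczesci}. First I would split the total Stieltjes integral of $A$ against $L^A$ into its continuous and jump components,
\[
\mathbb{E}\int_0^1 A_s\,dL_s^{A}=\mathbb{E}\int_0^1 A_s\,dL_s^{A,c}+\mathbb{E}\sum_{0\leq s\leq 1}A_s\underline{\Delta}L_s^A+\mathbb{E}\sum_{0\leq s\leq 1}A_s\overline{\Delta}L_s^A,
\]
which is legitimate because $L^A$ has locally bounded variation by Lemma \ref{Fact} and its jumps split into small/large parts according to whether the driving jump of $X$ lies in $[-L,0]$ or $(-\infty,-L)$.

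By Lemma \ref{calkczesci} the left-hand side equals $\mathbb{E}(A_0)\,l^A$, so
\[
-\alpha^2\mathbb{E}\int_0^1 A_s\,dL_s^{A,c}-\alpha^2\mathbb{E}\sum_{0\leq s\leq 1}A_s\underline{\Delta}L_s^A
=-\alpha^2\mathbb{E}(A_0)\,l^A+\alpha^2\mathbb{E}\sum_{0\leq s\leq 1}A_s\overline{\Delta}L_s^A.
\]
This already gives the $\overline{\Delta}L^A$ contribution appearing in (\ref{12e}); it remains to convert $l^A$ into a quantity that matches the left-hand side of (\ref{12e}).

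For that I would use the representation (\ref{jeden}) together with stationarity of $V^K$: since $V_t^K\in[0,K]$ is bounded we have $\mathbb{E}V_1^K=\mathbb{E}V_0^K<\infty$, and taking expectations in $V_1^K=V_0^K+X_1+L_1^A-L_1^K$ yields $l^K-l^A=\mathbb{E}X_1$, i.e. $l^A=l^K-\mathbb{E}X_1$. Substituting this into the previous display produces the term
\[
-\alpha^2\mathbb{E}(A_0)l^K+\alpha^2\mathbb{E}(A_0)\mathbb{E}X_1,
\]
and moving $-\alpha^2\mathbb{E}(A_0)l^K$ to the left of (\ref{7e}) absorbs it into the prefactor $\alpha(1-e^{\alpha K})l^K$, producing $\alpha(1-e^{\alpha K}+\alpha\mathbb{E}(A_0))l^K$. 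The remaining terms on the right are exactly those of (\ref{12e}), with the $o(\alpha^2)$ error unchanged.

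There is no real obstacle here; the statement is essentially a bookkeeping rewrite of (\ref{7e}) via Lemma \ref{calkczesci}. The only points requiring care are verifying the finiteness used to apply integration by parts and to justify $\mathbb{E}V_1^K=\mathbb{E}V_0^K$ (both immediate from $V^K\in[0,K]$ and Lemma \ref{Fact}), and correctly tracking that Lemma \ref{calkczesci} involves $dL^A$ in full while (\ref{7e}) already separates the large-jump part, which is why the $\overline{\Delta}L^A$-sum reappears with a plus sign on the right-hand side of (\ref{12e}).
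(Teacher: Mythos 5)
Your proof is correct and is exactly the bookkeeping rewrite the paper intends when it announces ``Now using Lemma~\ref{calkczesci} we can rewrite (\ref{7e}) as follows'' without giving further detail: you split $\mathbb{E}\int_0^1 A_s\,dL_s^{A}$ into continuous, small-jump, and large-jump parts, invoke Lemma~\ref{calkczesci}, and substitute $l^A=l^K-\mathbb{E}X_1$ (from stationarity of the bounded process $V^K$) to absorb $-\alpha^2\mathbb{E}(A_0)l^K$ into the left-hand prefactor. The identity $l^A=l^K-\mathbb{E}X_1$ is the same one the paper itself uses later in the proof of Theorem~\ref{mainas}, so your argument is fully consistent with the authors' intended route.
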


\section{Main results}\label{sec:main}
The first main result gives representation of $l^K$ in terms of basic characteristics of the process $X$ and lower boundary $A$.
\begin{thm}
Let $\{X_t\}$ be a L\'{e}vy process and $l^K$ the loss rate defined in (\ref{definitionlossrate}). If $\int_1^{\infty}y\nu(dy)=\infty$, then $l^K=\infty$, and otherwise
\begin{eqnarray}
l^K&=&\mathbb{E}X_1\left[\frac{1}{K-\mathbb{E}A_0}\int_0^Kx\pi_K(dx)-\frac{\mathbb{E}A_0}{(K-\mathbb{E}A_0)}\right]+\frac{\sigma^2}{2(K-\mathbb{E}A_0)} \\ \nonumber &&+\frac{1}{2(K-\mathbb{E}A_0)}\int_0^a\int_z^K\int_{-\infty}^{\infty}\varphi_K(x,y,z)\nu(dy)\pi_K^z(dx)\xi(dz),
\end{eqnarray}
where
$$\varphi_K(x,y,z)=\left\{ \begin{array}{ll}
-(x-z)^2-2y(x-z)&\mbox{if $y\leq -x+z$},\\
y^2&\mbox{if $-x+z<y<K-x$},\\
2y(K-x)-(K-x)^2&\mbox{if $y\geq K-x$.}
\end{array} \right.$$
\end{thm}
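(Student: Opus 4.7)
The strategy is to extract $l^K$ by Taylor-expanding both sides of identity (\ref{12e}) from Lemma \ref{12lem} as $\alpha\downarrow 0$ and matching coefficients of $\alpha^2$. I first dispose of the divergent case: if $\int_1^\infty y\,\nu(dy)=\infty$ then, since $V^K_{s-}\ge 0$, every $X$-jump of size $y>K$ forces $\Delta L_s^K\ge y-K$, and the L\'evy compensation formula gives
\[
\mathbb{E}L_1^K\;\ge\;\int_K^\infty(y-K)\,\nu(dy)=\infty,
\]
so $l^K=\infty$. From now on assume $\int_1^\infty y\,\nu(dy)<\infty$, which makes $\mathbb{E}X_1$, $\overline l_j^K$ and $\overline l_j^A$ finite.

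The LHS of (\ref{12e}) expands as $\alpha^2(\mathbb{E}A_0-K)\,l^K+O(\alpha^3)$. For the RHS I substitute $\kappa(\alpha)=\alpha\mathbb{E}X_1+\frac{\alpha^2}{2}\kappa''(0)+o(\alpha^2)$, $\mathbb{E}e^{\alpha V_0^K}=1+\alpha\mathbb{E}V_0^K+o(\alpha)$ (using that $V_0^K\in[0,K]$), $e^{\alpha K}=1+\alpha K+O(\alpha^2)$, $e^{\alpha A_s}=1+\alpha A_s+O(\alpha^2)$, and $1-e^{\pm\alpha\Delta L}=\mp\alpha\Delta L-\frac{\alpha^2}{2}(\Delta L)^2+O(\alpha^3)$ in the two large-jump sums. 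A term-by-term audit shows (a) the $O(\alpha)$ contributions cancel in three pairs $\pm\mathbb{E}X_1$, $\pm\overline l_j^K$, $\pm\overline l_j^A$; (b) the $\alpha^2$ cross-term $K\overline l_j^K$ produced by $e^{\alpha K}$ in the upper large-jump sum cancels the $-\alpha e^{\alpha K}\overline l_j^K$ term; and (c) the $\alpha^2$ cross-term $-\mathbb{E}\sum_s A_s\overline\Delta L_s^A$ produced by $e^{\alpha A_s}$ in the lower large-jump sum cancels the explicit $+\alpha^2\mathbb{E}\sum_s A_s\overline\Delta L_s^A$. What survives at order $\alpha^2$ is
\[
(K-\mathbb{E}A_0)\,l^K \;=\; \mathbb{E}X_1\,(\mathbb{E}V_0^K-\mathbb{E}A_0)+\frac{\kappa''(0)}{2}-\frac{1}{2}\mathbb{E}\sum_{0\le s\le 1}\!\left[(\Delta L_s^K)^2+(\Delta L_s^A)^2\right].
\]

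The final step converts the jump sums together with the $\int y^2\,\nu(dy)$ inside $\kappa''(0)=\sigma^2+\int y^2\,\nu(dy)$ into a common triple integral via the compensation formula: in stationarity the pair $(V^K_{s-},A_s)$ has law $\pi_K^z(dx)\,\xi(dz)$ with conditional $X$-jump intensity $\nu(dy)$, so
\[
\mathbb{E}\sum_{0\le s\le 1}g(V^K_{s-},A_s,\Delta X_s)=\int_0^a\!\!\int_z^K\!\!\int_{-\infty}^{\infty} g(x,z,y)\,\nu(dy)\,\pi_K^z(dx)\,\xi(dz)
\]
for measurable $g\ge 0$. Splitting on the three reflection regimes---$y\le z-x$ (giving $\Delta L_s^A=z-x-y$, $\Delta L_s^K=0$), $z-x<y<K-x$ (no reflection), and $y\ge K-x$ (giving $\Delta L_s^A=0$, $\Delta L_s^K=x+y-K$)---the elementary identity $y^2-(\Delta L_s^A)^2-(\Delta L_s^K)^2=\varphi_K(x,y,z)$ is verified case by case, and substituting together with $\mathbb{E}V_0^K=\int_0^K x\,\pi_K(dx)$ yields the announced formula after dividing by $K-\mathbb{E}A_0$. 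The main technical concern is that $\kappa''(0)$ may itself be infinite under only $\int_1^\infty y\,\nu(dy)<\infty$; this is handled by keeping the large-$|y|$ piece $\int_{|y|>L}(e^{\alpha y}-1-\alpha y)\,\nu(dy)$ inside $\kappa(\alpha)$ unexpanded and combining it directly with the large-jump sums on the RHS, so that the cancellations producing $\varphi_K$---which grows only linearly in $|y|$ at infinity---are genuine identities between finite quantities rather than differences of infinities.
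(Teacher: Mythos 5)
Your proof is correct and follows essentially the same route as the paper: expand the martingale identity from Lemma~\ref{12lem} to order $\alpha^2$, use the $L$-truncation to keep the large-jump contributions as genuine finite quantities, identify the $O(\alpha)$ and $O(\alpha^2)$ cancellations, and convert the surviving jump sums into the triple integral via the compensation formula together with the case-by-case identity $y^2-(\Delta L_s^A)^2-(\Delta L_s^K)^2=\varphi_K(x,y,z)$. The paper's Step~1 writes out each term as an explicit integral before combining and then sends $L\to\infty$ followed by $\alpha\downarrow 0$, whereas you organize the same cancellations more compactly before invoking the compensation formula, but the underlying argument is the same.
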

\begin{proof}
The first claim follows immediately if we note that for $\int_1^{\infty}y\nu(dy)=\infty$ and $L>K$ we have: $$l^K\geq \int_0^K\pi_K(dx)\int_L^{\infty}(y-K+x)\nu(dy)\geq\int_L^{\infty}(y-K)\nu(dy)=\infty.$$
The idea of the proof of the second part of the theorem
is based on two steps: {\bf 1)} expanding all terms
on the right-hand side of (\ref{12e}) and
{\bf 2)} sending $L$ to infinity and then $\alpha\downarrow 0$.

{\bf Step 1.} In particular, the first term on r.h.s. of (\ref{12e}) equals:
\begin{eqnarray*}
\kappa(\alpha)\mathbb{E}e^{\alpha V_0^K}=&&\int_0^Ke^{\alpha x}\int_{-\infty}^{\infty}e^{\alpha y}\mathbf{1}_{(|y|\geq L)}\nu(dy)\pi_K(dx)\\
&&-\int_{-\infty}^{\infty}\mathbf{1}_{(|y|\geq L)}\nu(dy)+\alpha\left(\theta_L-\int_0^Kx\int_{-\infty}^{\infty}\mathbf{1}_{(|y|\geq L)}\nu(dy)\pi_K(dx)\right)\\
&&+\alpha^2\left(\theta_L\int_0^Kx\pi_K(dx)+\frac{\sigma^2}{2}+\int_{-L}^{L}\frac{y^2}{2}\nu(dy)\right.\\
&&\left.-\int_0^K\frac{x^2}{2}\int_{-\infty}^{\infty}\mathbf{1}_{(|y|\geq L)}\nu(dy)\pi_K(dx)\right)+o(\alpha^2).
\end{eqnarray*}
Similarly,
\begin{eqnarray*}
\alpha\mathbb{E}X_1=\alpha\theta_L+\alpha\int_{-\infty}^{\infty}y\mathbf{1}_{(|y|\geq L)}\nu(dy),
\end{eqnarray*}
\begin{eqnarray*}
\frac{\alpha^2}{2}\mathbb{E}\sum_{0\leq s\leq 1}(\underline{\Delta}L_s^K)^2=\frac{\alpha^2}{2}\int_0^K\pi_K(dx)\int_{K-x}^{L}(y-K+x)^2\nu(dy)
\end{eqnarray*}
and
\begin{eqnarray*}
\frac{\alpha^2}{2}\mathbb{E}\sum_{0\leq s\leq 1}(\underline{\Delta}L_s^A)^2=\frac{\alpha^2}{2}\int_0^a\int_z^K\int_{-L}^{-x+z}(x+y-z)^2\nu(dy)\pi_K^z(dx)\xi(dz).
\end{eqnarray*}
For $x>0$ denote $\overline{\nu}(x)=\nu((x,\infty))$ and similarly for $x<0$ let $\underline{\nu}(x)=\nu((-\infty,x))$.
Then:
\begin{eqnarray*}
\alpha\overline{l}_j^A&=&
-\alpha\int_{-\infty}^{-L}y\nu(dy)-\alpha\int_0^Kx\pi_K(dx)\underline{\nu}(-L)+\alpha\int_0^a\int_z^Kz\pi_K^z(dx)\xi(dz)\underline{\nu}(-L)
\end{eqnarray*}
and
\begin{eqnarray*}
\alpha e^{\alpha K}\overline{l}_j^K&=&
\alpha\int_L^{\infty}y\nu(dy)+\alpha^2K\int_L^{\infty}y\nu(dy)
\\ &&+(\alpha+\alpha^2K)\left[\int_0^Kx\pi_K(dx)\overline{\nu}(L)-K\overline{\nu}(L)\right]+o(\alpha^2).
\end{eqnarray*}
The other terms equal:
\begin{eqnarray*}
e^{\alpha K}\mathbb{E}\sum_{0\leq s\leq 1}(1-e^{\alpha\overline{\Delta}L_s^K})
&=&(1+\alpha K+\frac{\alpha^2K^2}{2})\overline{\nu}(L)-\int_0^Ke^{\alpha x}\int_L^{\infty}e^{\alpha y}\nu(dy)\pi_K(dx)+o(\alpha^2)
\end{eqnarray*}
and
\begin{eqnarray*}
\mathbb{E}\sum_{0\leq s\leq 1}e^{\alpha A_s}(1-e^{-\alpha\overline{\Delta}L_s^A})
&=&\mathbb{E}\sum_{0\leq s\leq 1}(1-e^{-\alpha\overline{\Delta}L_s^A})+\alpha^2\mathbb{E}\sum_{0\leq s\leq 1} A_s\overline{\Delta}L_s^A+o(\alpha^2)
\end{eqnarray*}
with
\begin{eqnarray*}
\mathbb{E}\sum_{0\leq s\leq 1}(1-e^{-\alpha\overline{\Delta}L_s^A})&=&
\underline{\nu}(-L)-\int_0^Ke^{\alpha x}\int_{-\infty}^{-L}e^{\alpha y}\nu(dy)\pi_K(dx)\\ &&+\int_0^a\alpha z\int_z^Ke^{\alpha x}\int_{-\infty}^{-L}e^{\alpha y}\nu(dy)\pi_K^z(dx)\xi(dz)\\&&-\frac{1}{2}\int_0^a\alpha^2 z^2\int_z^Ke^{\alpha x}\int_{-\infty}^{-L}e^{\alpha y}\nu(dy)\pi_K^z(dx)\xi(dz)+o(\alpha^2).
\end{eqnarray*}


{\bf Step 2.}
If we now rearrange all terms of (\ref{12e}) using above identities and let $L\to\infty$ (note that $\theta_L\to\mathbb{E}X_1$ as $L\to\infty$) we get:
\begin{eqnarray*}
\alpha(1-e^{\alpha K}+\alpha\mathbb{E}A_0)l^K=&&-\mathbb{E}X_1\alpha^2\int_0^Kx\pi_K(dx)-\frac{\sigma^2\alpha^2}{2}\\ &&-\frac{\alpha^2}{2}\int_0^a\int_z^K\int_{-x+z}^{K-x}y^2\nu(dy)\pi_K^z(dx)\xi(dz)\\ &&+\frac{\alpha^2}{2}\int_0^a\int_z^K\int_{K-x}^{\infty}((x-K)^2+2y(x-K))\nu(dy)\pi_K^z(dx)\xi(dz)\\ &&+\frac{\alpha^2}{2}\int_0^a\int_z^K\int_{-\infty}^{-x+z}((x-z)^2+2y(x-z))\nu(dy)\pi_K^z(dx)\xi(dz)\\ &&+\alpha^2\mathbb{E}A_0\mathbb{E}X_1+o(\alpha^2).
\end{eqnarray*}
The proof is completed by dividing both sides of above equation by $\alpha(1-e^{\alpha K}+\alpha\mathbb{E}A_0)$ and sending $\alpha$ to 0.
\halmos\end{proof}

Assume now that there exists $\gamma>0$ ($\gamma\in \Theta$) such that $\kappa(\gamma)=0$.
Define now the new probability measure:
$$\left. \frac{d\mathbb{P}^\gamma}{d\mathbb{P}}\right| _{\mathcal{F}_{t}}=e^{\gamma X_t}$$
for which we have $\mathbb{E}^\gamma X_1=\kappa^{'}(\gamma)>0$ since on $\mathbb{P}^\gamma$ process $X$ is a L\'{e}vy process
with Laplace exponent $\kappa_\gamma(\alpha)=\kappa(\alpha+\gamma)$.
We also need two passage times:
$$\tau^A_z(x)=\inf\{t\geq 0: X_t\geq \widehat{A}_t^{-z}+x\}, \qquad \tau_{-z}^-=\inf\{t\geq 0: X_t<-z\}.$$
Further, let $\tau^A(x)=\inf\{t\geq 0: X_t\geq \widehat{A}_t^{\xi}+x\}$, where
$\widehat{A}_t^{\xi}=\int_0^\infty A_t^{-y}\,\xi(dy)$
and $B^A(x)=X_{\tau^A(x)}-x$.
The second main result concerns the asymptotics of $l^K$ as $K\to\infty$.
\begin{thm}\label{mainas}
Assume that there exists $\gamma>0$ ($\gamma\in \Theta$) such that $\kappa(\gamma)=0$ and $\kappa^{'}(\gamma)<\infty$.
Then there exists random variable
$B^A(\infty)$ such that:
\begin{equation}\label{mainproblem}
\lim_{x\to\infty}\mathbb{E}^\gamma e^{-\gamma B^A(x)}=\mathbb{E}^\gamma e^{-\gamma B^A(\infty)}.\end{equation}
Furthermore, there exists finite constant $D$ such that:
\begin{equation}\label{Cramer}l^K\sim De^{-\gamma K},\qquad \mbox{as $K\to\infty$,}
 \end{equation}
 where we write $f(K)\sim g(K)$ when $\lim_{K\to\infty}f(K)/g(K)=1$, and
 \begin{eqnarray}
 D&=&-\mathbb{E}X_1C_\gamma+\mathbb{E}^\gamma e^{-\gamma B^A(\infty)}\int_0^\infty e^{\gamma x}\mathbb{P}^\gamma(\tau_{-x}^-=\infty)\int_x^\infty \left(1-e^{\gamma(y-x)}\right)\,\nu(dy)\,dx \nonumber\\
 &&+\int_{-\infty}^0\left(y+\gamma^{-1}\left(1-e^{\gamma y}\right)\right)\,\nu(dy)\nonumber\\&&
 +\int_0^\infty \int_0^{a\wedge x}\mathbb{P}(\tau^A_z(x)<\infty)\int_{-\infty}^{-x+z}\left(1-e^{\gamma(x+y-z)}\right)\,\nu(dy)\,\xi(dz)\,dx
\label{D}
 \end{eqnarray}
 with $$C_\gamma=\mathbb{E}e^{\gamma A_s}.$$
\end{thm}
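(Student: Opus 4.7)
The plan is to combine the exact finite-buffer representation of $l^K$ from the previous theorem with a Cram\'er-type asymptotic expansion of the stationary law $\pi_K$ as $K\to\infty$, obtained through the Esscher change of measure $\mathbb{P}^\gamma$. The overall scheme follows Asmussen and Pihlsg\aa rd \cite{AP}, but the periodic lower barrier now forces one to carry around the invariant measure $\xi$ and its associated overshoot law. The statement splits into two tasks: existence of the limit $B^A(\infty)$ in (\ref{mainproblem}), and the identification of the constant $D$ via the four contributions appearing in (\ref{D}).

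First I would prove (\ref{mainproblem}). Since $\kappa(\gamma)=0$ and $\kappa^\prime(\gamma)\in(0,\infty)$, under $\mathbb{P}^\gamma$ the L\'evy process drifts to $+\infty$ and hence $\tau^A(x)<\infty$ $\mathbb{P}^\gamma$-a.s. Viewing $B^A(x)=X_{\tau^A(x)}-x$ as the overshoot of $X_t-\widehat{A}_t^\xi$ above level $x$, a standard Markov renewal argument (the nonlattice hypothesis on $\nu$ rules out arithmetic overshoots) yields weak convergence of $B^A(x)$ under $\mathbb{P}^\gamma$ to a proper random variable $B^A(\infty)$, and then (\ref{mainproblem}) follows by bounded convergence applied to the test function $e^{-\gamma\cdot}$.

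Next I would rewrite the identity of the previous theorem in the form
\begin{equation*}
(K-\mathbb{E}A_0)\,l^K\;=\;\mathbb{E}X_1\!\int_0^K\!x\,\pi_K(dx)-\mathbb{E}X_1\,\mathbb{E}A_0+\tfrac{\sigma^2}{2}+\tfrac{1}{2}\!\int_0^a\!\!\int_z^K\!\!\int_{-\infty}^{\infty}\!\varphi_K(x,y,z)\,\nu(dy)\,\pi_K^z(dx)\,\xi(dz),
\end{equation*}
and subtract from it the corresponding conservation law for the infinite-buffer case (where $l^\infty=0$ and $\varphi_K$ loses its third branch), which annihilates the bulk and leaves a remainder that decomposes naturally into three exponentially small pieces. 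The tail difference $\int_0^K x\,\pi_K(dx)-\mathbb{E}V_\infty^\infty$, controlled by the $e^{-\gamma K}$-tail of $\pi_K$ obtained from the stationary representation of the earlier lemma after Esscher tilting, yields the first summand $-\mathbb{E}X_1\,C_\gamma$ of (\ref{D}). The third-branch portion $y\geq K-x$ of the $\varphi_K$-integral, transformed by $d\mathbb{P}^\gamma/d\mathbb{P}=e^{\gamma X}$ and combined with the overshoot asymptotic of step one, factors as $e^{-\gamma K}$ times the second line of (\ref{D}), with $\mathbb{E}^\gamma e^{-\gamma B^A(\infty)}$ appearing as the upper-barrier overshoot factor and $\mathbb{P}^\gamma(\tau^-_{-x}=\infty)$ emerging as the descending-ladder probability that the tilted process never revisits the lower boundary after an overshoot at height $x$. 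The remaining difference of $\varphi_K$-integrals on $\{-x+z\leq y<K-x\}$ contracts in the limit to the correction $\int_{-\infty}^0(y+\gamma^{-1}(1-e^{\gamma y}))\,\nu(dy)$, and the first-branch part on $\{y\leq -x+z\}$ gives the last line of (\ref{D}).

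The key technical obstacle is the Cram\'er-type asymptotic
\begin{equation*}
\pi_K^z\bigl((K-y,K]\bigr)\;=\;e^{-\gamma K}\,\rho_z(y)\bigl(1+o(1)\bigr)\quad(K\to\infty)
\end{equation*}
for an explicit limiting kernel $\rho_z$, together with enough uniformity to justify interchanging this limit with the $\nu$-, $\pi_K^z$- and $\xi$-integrations above. The earlier lemma expresses $\overline{\pi}_K^z$ as a double-barrier exit probability for $X$; the upper-exit part, treated via the Esscher transform stopped at $\tau$, produces the overshoot factor $\mathbb{E}^\gamma e^{-\gamma B^A(\infty)}$, while the lower-exit part contributes $\mathbb{P}^\gamma(\tau_{-x}^-=\infty)$ through the positive recurrence of the tilted ladder process. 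Once uniform integrability of the resulting bounds against $\varphi_K$ is secured, the four contributions assemble into the constant $D$ and the proof is complete.
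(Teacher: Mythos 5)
Your route to the asymptotics is genuinely different from the paper's, and the divergence matters. The paper does \emph{not} try to take the explicit formula of the first theorem (the one involving $\varphi_K$) to the limit $K\to\infty$. Instead it goes back to the Kella--Whitt identity obtained in the proof of Corollary~\ref{cor8} and substitutes $\alpha=\gamma$ directly. Since $\kappa(\gamma)=0$, the term $\kappa(\alpha)\mathbb{E}e^{\alpha V_0^K}$ vanishes, and using $\mathbb{E}\int_0^1 e^{\gamma A_s}\,dL_s^{A,c}=C_\gamma l_c^A$ together with the flow identity $l^A=l^K-\mathbb{E}X_1$ one obtains, after an $\epsilon\downarrow 0$ jump-truncation, the exact finite-$K$ representation
\begin{equation*}
l^K=\frac{e^{\gamma K}}{e^{\gamma K}-C_\gamma}I_1+\frac{1}{e^{\gamma K}-C_\gamma}I_2+\frac{e^{\gamma K}\gamma^{-1}}{e^{\gamma K}-C_\gamma}I_3+\frac{\gamma^{-1}}{e^{\gamma K}-C_\gamma}I_4-\frac{C_\gamma\mathbb{E}X_1}{e^{\gamma K}-C_\gamma}.
\end{equation*}
Here the factor $e^{-\gamma K}$ is pulled out \emph{exactly}, the prefactors involving $C_\gamma$ are rational in $e^{\gamma K}$, and the $I_j$ are genuine integrals with a clean limiting behaviour ($I_2,I_4$ converge to finite constants, $I_1,I_3$ decay like $e^{-\gamma K}$ with a prefactor tied to the overshoot). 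The remaining work — identifying the limits of the $I_j$, where the overshoot factor $\mathbb{E}^\gamma e^{-\gamma B^A(\infty)}$ and the avoidance probability $\mathbb{P}^\gamma(\tau^-_{-x}=\infty)$ enter — is then the same as in \cite[Theorem~4.1]{AP}, modulo the extra $\xi(dz)$-integration and the shift $x\mapsto x-z$ coming from the periodic barrier; this is where renewal theory for the ladder process $\{X_{\tau^A(an)}\}$ and the nonlattice assumption are invoked, as you also propose for (\ref{mainproblem}).

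Your plan of extracting the asymptotics from the $\varphi_K$-formula instead runs into a difficulty you pass over too quickly. In that identity the bracket equals $(K-\mathbb{E}A_0)\,l^K$, which by the claimed result behaves like $DKe^{-\gamma K}$; yet each of its constituents ($\mathbb{E}X_1\int_0^K x\,\pi_K(dx)$, $\sigma^2/2$, the triple $\varphi_K$-integral) converges to a nonzero constant. You must therefore establish that these constants cancel exactly \emph{and} that the residual, obtained from second-order Cram\'er asymptotics of $\pi_K-\pi_\infty$ uniformly over the $z$- and $y$-integrations, produces precisely $DKe^{-\gamma K}$. Saying this ``annihilates the bulk and leaves a remainder that decomposes naturally into three exponentially small pieces'' is exactly the step that needs a proof: the required rate is not $e^{-\gamma K}$ but $Ke^{-\gamma K}$ (since $\int_K^\infty x\,\pi_\infty(dx)\asymp Ke^{-\gamma K}$), and without an argument showing that the $1/K$ and constant orders cancel identically, there is nothing to stop the bracket from producing a spurious $c/K$ or $c'$ leading term, which would contradict (\ref{Cramer}). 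The paper's $\alpha=\gamma$ substitution is precisely the device that sidesteps this fragile cancellation, and without it (or an equivalent exact identity), your scheme is not yet a proof.
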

\begin{proof}
The proof is based on the following observation:
\begin{equation}\label{newreplK}
l^K=\frac{e^{\gamma K}}{e^{\gamma K}-C_\gamma}I_1+\frac{1}{e^{\gamma K}-C_\gamma}I_2+\frac{e^{\gamma K}\gamma^{-1}}{e^{\gamma K}-C_\gamma}I_3+\frac{\gamma^{-1}}{e^{\gamma K}-C_\gamma}I_4-\frac{C_\gamma\mathbb{E}X_1}{e^{\alpha K}-C_\gamma},\end{equation}
where
$$I_1=\int_0^K\int_{K-x}^\infty (y-K+x)\nu(dy)\pi_K(dx),$$
$$I_2=\int_0^a\int_0^K\int_{-\infty}^{-x+z} (x+y-z)\nu(dy)\pi_K^z(dx)\xi(dz),$$
$$I_3=\int_0^K\int_{K-x}^\infty (1-e^{\gamma(y-K+x)})\nu(dy)\pi_K(dx),$$
$$I_4=\int_0^a\int_0^K\int_{-\infty}^{-x+z} (1-e^{\gamma(x+y-z)})\nu(dy)\pi_K^z(dx)\xi(dz).$$
Indeed,
note that from equation (7) taking $\alpha=\gamma$ we get:
$$0=\gamma C_\gamma l_c^A-\gamma e^{\gamma K}l_c^K+C_\gamma\mathbb{E}\sum_{0\leq s\leq 1}(1-e^{-\gamma\Delta L_s^A})+e^{\gamma K}\mathbb{E}\sum_{0\leq s\leq 1}(1-e^{\gamma\Delta L_s^K}),$$
where we used fact that $\mathbb{E}\int_0^1e^{\gamma A_s}dL_s^A=C_\gamma l_c^A.$
Let $\epsilon >0$. We split $\Delta L_t^K$ into two parts, $\underline{\Delta}^\epsilon L_t^K$ and $\overline{\Delta}^\epsilon L_t^K$, corresponding to $\Delta X_s\in[0,\epsilon]$ and $\Delta X_s\in(\epsilon,\infty)$, respectively, and $\Delta L_t^A$ into  $\underline{\Delta}^\epsilon L_t^A$ and $\overline{\Delta}^\epsilon L_t^A$, corresponding to $\Delta X_s\in[-\epsilon,0]$ and $\Delta X_s\in(-\infty,-\epsilon)$, respectively.
Now we have:
$$e^{\gamma K}\mathbb{E}\sum_{0\leq s\leq 1}(1-e^{\gamma\underline{\Delta}^\epsilon L_s^K})=e^{\gamma K}\left(-\gamma (l_j^K-\overline{l}_j^K)-\frac{\gamma^2}{2}\mathbb{E}\sum_{0\leq s\leq 1}(\underline{\Delta}^\epsilon L_s^K)^2\right)+o(\epsilon^2),$$
$$\mathbb{E}\sum_{0\leq s\leq 1}(1-e^{-\gamma\underline{\Delta}^\epsilon L_s^A})=\gamma (l_j^A-\overline{l}_j^A)-\frac{\gamma^2}{2}\mathbb{E}\sum_{0\leq s\leq 1}(\underline{\Delta}^\epsilon L_s^A)^2+o(\epsilon^2).$$
Thus,
\begin{eqnarray*}
0&=&\gamma C_\gamma l_c^A-\gamma e^{\gamma K}l_c^K+C_\gamma\mathbb{E}\sum_{0\leq s\leq 1}(1-e^{-\gamma\overline{\Delta}^\epsilon L_s^A})+e^{\gamma K}\mathbb{E}\sum_{0\leq s\leq 1}(1-e^{\gamma\overline{\Delta}^\epsilon L_s^K})\\
&&+\gamma C_\gamma(l_j^A-\overline{l}_j^A)-\frac{\gamma^2}{2}C_\gamma\mathbb{E}\sum_{0\leq s\leq 1}(\underline{\Delta}^\epsilon L_s^A)^2-e^{\gamma K}\gamma(l_j^K-\overline{l}_j^K)-\frac{\gamma^2}{2}\mathbb{E}\sum_{0\leq s\leq 1}(\underline{\Delta}^\epsilon L_s^K)^2\\ &&+o(\epsilon^2).
\end{eqnarray*}
Using the fact that $l^A=l^K-\mathbb{E}X_1$ we have:
\begin{eqnarray*}
l^K(C_\gamma-e^{\gamma K})\gamma&=&\gamma C_\gamma\mathbb{E}X_1+\gamma C_\gamma \overline{l}_j^A-\gamma e^{\gamma K}\overline{l}_j^K-C_\gamma\mathbb{E}\sum_{0\leq s\leq 1}(1-e^{-\gamma\overline{\Delta}^\epsilon L_s^A})\\
&&-e^{\gamma K}\mathbb{E}\sum_{0\leq s\leq 1}(1-e^{\gamma\overline{\Delta}^\epsilon L_s^K})+\frac{\gamma^2}{2}C_\gamma\mathbb{E}\sum_{0\leq s\leq 1}(\underline{\Delta}^\epsilon L_s^A)^2+\frac{\gamma^2}{2}\mathbb{E}\sum_{0\leq s\leq 1}(\underline{\Delta}^\epsilon L_s^K)^2\\ &&+o(\epsilon^2).
\end{eqnarray*}
If we send $\epsilon\to 0$ we get:
\begin{eqnarray*}
l^K(C_\gamma-e^{\gamma K})\gamma=\gamma C_\gamma\mathbb{E}X_1-\gamma C_\gamma I_2-\gamma e^{\gamma K}I_1-C_\gamma I_4-e^{\gamma K}I_3.
\end{eqnarray*}
Now (\ref{newreplK}) follows by dividing by $(C_\gamma-e^{\gamma K})\gamma$.

Note that $I_1$ and $I_3$ are the same like those in \cite[Theorem 3.2]{AP} and $I_2$ and $I_4$ have just additional integral over $\xi(dz)$ and
we should take $x-z$ instead of $x$ under the integrals sign. Using thus the same arguments like in the proof of \cite[Theorem 4.1]{AP} will now completes the proof
when we prove weak convergence (\ref{mainproblem}). To this one can use classical renewal arguments applied to the process $\{X_{\tau^A(an)},n\in N\}$
(by considering {\it ladder height lines} being $\widehat{A}_t$ starting from invariant measure shifted by $a$ from the previous position of ladder process).
\halmos\end{proof}

\begin{ex}
For a stable $M/M/1$ queue, that is for $X_t=\sum_{i=1}^{N_t}\sigma_i-t$ with $\{\sigma_i\}_{\{i\geq 1\}}$ being i.i.d. exponentially r.v.'s with intensity $\mu$ and $N_t$ being a Poisson process
with intensity $\lambda<\mu$, we have $\nu(dx)=\mu\lambda e^{-\mu x}\,dx$ and $\mathbb{P}^\gamma(\tau_{-x}^-=\infty)=1-e^{-\gamma x}$, where $\gamma=\mu-\lambda$,
since considering $X$ on $\mathbb{P}^\gamma$ is equivalent of exchanging intensities of arrival and service processes.
Moreover, choosing saw-like lower boundary given in (\ref{saw}) by lack of memory of exponential distribution on $\mathbb{P}^\gamma$ we have $B^A(\infty)=e_1-Y$, where $Y$ has uniform
distribution $\xi(dx)=\frac{dx}{a}$ ($x\in [0,a]$) and $e_1$ is exponential r.v. with intensity $\lambda$.
This gives:
$$D=\frac{1}{a}\left(e^{a(\mu-\lambda)}-1\right)\frac{\mu-\lambda}{\mu}\frac{\lambda}{\mu}.$$
\end{ex}

\ack
This work is partially supported by the Ministry of Science and
Higher Education of Poland under the grants N N201 394137
(2009-2011).

\end{document}